\def\1{\underline{1}}
\def\R{{\mathbb R}}
\def\Z{{\mathbb Z}}
\def\C{{\mathbb C}}
\def\O{{\mathcal O}}
\newcommand{\ind}{{\rm ind}\,}
\newcommand{\indGSV}{{\rm ind}_{\rm GSV}}
\newcommand{\indrad}{{\rm ind}_{\rm rad}}
\newcommand{\indhom}{{\rm ind}_{\rm hom}}
\newcommand{\rindrad}{{\rm rind}_{\rm rad}}
\newcommand{\rindGSV}{{\rm rind}_{\rm GSV}}
\def\eps{\varepsilon}
\newtheorem{theorem}{Theorem}
\newtheorem{proposition}{Proposition}
\newenvironment{definition}
{\smallskip\noindent{\bf Definition\/}:}{\smallskip\par}
\newenvironment{corollary}
{\smallskip\noindent{\bf Corollary\/}.}{\smallskip\par}
\newenvironment{remark}
{\smallskip\noindent{\bf Remark\/}.}{\smallskip\par}
\newenvironment{proof}
{\noindent{\bf Proof\/}.}{{ $\square$}\smallskip\par}
\title{On equivariant indices of 1-forms on varieties.
\footnote{2010 Math. Subject Class.: 14B05, 58A10, 58E40, 19A22.
Keywords: finite group actions, invariant 1-forms, indices.}
}
\author{S.M.~Gusein-Zade \thanks{The work of the first author
(Sections~1, 2, 5, 6) 
was supported by the grant 16-11-10018 of the Russian Science Foundation.
Address: Moscow State University, Faculty
of Mathematics and Mechanics, GSP-1, Moscow, 119991, Russia. E-mail:
sabir\symbol{'100}mccme.ru} \and F.I.~Mamedova \thanks{
Address: Leibniz Universit\"{a}t Hannover, Institut f\"{u}r Algebraische Geometrie,
Postfach 6009, D-30060 Hannover, Germany.
E-mail: mamedova\symbol{'100}math.uni-hannover.de}}
\begin{document}

\maketitle

\begin{abstract}
For a $G$-invariant holomorphic 1-form with an isolated singular point on a germ of a complex-analytic
$G$-variety with an isolated singular point ($G$ is a finite group) one has notions of the equivariant
homological index and of the (reduced) equivariant radial index as elements of the ring of complex
representations of the group. We show that on a germ of a smooth complex-analytic $G$-variety these
indices coincide. This permits to consider the difference between them as a version of the equivariant
Milnor number of a germ a $G$-variety with an isolated singular point.
\end{abstract}

{\bf 1. Introduction.}
An isolated singular point of a vector field or a 1-form on a smooth manifold has a well-known
integer invariant~-- the index. It can be defined for vector fields or 1-forms on a complex-analytic
manifold as well.
The notions of the index of an isolated singular
point of a vector field or of a 1-form have generalizations to singular (real or complex) analytic
varieties. One of these generalizations is the radial index defined for an isolated singular point of
a vector field or of a 1-form on an arbitrary (real or complex, singular) analytic variety: \cite{EG-Fields,
EG-Dedicata, EG-Survey}. For a germ $(V,0)$ of a complex analytic variety with an isolated
singular point at the origin and for a complex analytic vector field on it, X.~Gomez Mont defined
the so-called homological index: \cite{G-M}. This notion was generalized to 1-forms in~\cite{EGS}.
The coincidence of the homological index of a holomorphic 1-form and the radial one on a non-singular complex
analytic manifold permits to interpret its difference on a germ of a variety with an isolated singular point
(this difference does not depend on a (complex-analytic) 1-form) as a version of the Milnor number
of the singular point of the variety: \cite{EGS}.

The notion of the radial index has an equivariant version for a singular point of a $G$-invariant
vector field or 1-form on a germ of a variety with an action of a finite group $G$: \cite{EG-EuJM}.
This index takes values in the Burnside ring $A(G)$ of the group. One has a natural homomorphism
from the Burnside ring $A(G)$ to the ring $R(G)$ of (complex) representations of the group $G$.
This gives a version of the equivariant radial index (the reduced equivariant radial index) with
values in the ring $R(G)$.

There are rather natural generalizations of the notions of the homological indices of a vector
field or of a 1-form to the equivariant setting, i.e., for $G$-invariant vector fields or 1-forms
on a germ of a variety with an action of a finite group $G$: see below. These generalizations
have values in the ring $R(G)$ of representations. It is easy to show that, for a holomorphic
vector field on a germ of a smooth complex analytic
manifold, the equivariant homological index and the equivariant ``usual'' (radial) index with values in the
ring $R(G)$ of representations coincide. This follows from the fact that one has a $G$-invariant deformation
of a $G$-invariant vector field with only non-degenerate singular points,
whence for non-degenerate singular points
these two indices obviously coincide. On the other hand similar arguments do not work for 1-forms.
$G$-invariant deformations of a $G$-invariant holomorphic 1-form on $(\C^n,0)$ have, as a rule, complicated singular points.
In order to prove that the equivariant homological index and the equivariant radial index of a holomorphic
1-form coincide, it is possible to try to describe all singular points which can appear in generic $G$-invariant
deformations and to compare these indices for them. However this seems to be a rather involved task in
general. This can be done for particular groups (say, for the cyclic groups $\Z_2$ and $\Z_3$), however
it is not clear to which extent this program can be performed in the general setting.

Here we prove that the equivariant homological index and the reduced equivariant radial index of a singular
point of a holomorphic 1-form on a smooth complex-analytic manifold coincide. The proof is based on
an induction by the dimension of the manifold and by the order of a (cyclic) group. This statement
permits to consider the difference between these indices as a version of the equivariant Milnor number
of a germ of a $G$-variety with an isolated singular point.

The authors are grateful to W.~Ebeling for a careful reading of the manuscript and a number of useful comments.

{\bf 2. Equivariant radial and homological indices.}
First we recall the notion of the equivariant radial index of a ($G$-invariant) 1-form
on a (real or complex) analytic variety: \cite{EG-EuJM}. Let the space $(\R^N, 0)$ be endowed with
a smooth action of a finite group $G$. Without loss of generality we may assume that the action
is linear. Let $(X,0)\subset(\R^N, 0)$ be a germ of a $G$-invariant real analytic variety at the origin and  
let $\omega$ be a (continuous) $G$-invariant 1-form on $(\R^N, 0)$.
Let $X = \bigcup_{i=0}^q X_i$ be a $G$-invariant Whitney stratification of the germ $(X, 0)$
such that all points $x$ of each stratum have one and the same isotropy group $G_x=\{g\in G:gx=x\}$.
A singular point of the 1-form $\omega$ on $(X,0)$ is a singular point of its
restriction to a stratum of the Whitney stratification of $(X,0)$. (If the stratum is zero-dimensional,
its point is assumed to be singular.) Let us assume that
the 1-form $\omega$ has an isolated singular point at the origin on $(X,0)$.

\begin{definition}
 A 1-form $\omega$ is called {\em radial} on $(X, 0)$ if, for an arbitrary
nontrivial analytic arc $\varphi: (\R, 0)\to (X, 0)$ on $(X, 0)$, the value of the 1-form
$\omega$ on the tangent vector $\dot{\varphi}(t)$ is positive for positive $t$ (small enough).
\end{definition}

Let $\eps>0$ be small enough so that in the closed ball $B_\eps$ of radius $\eps$ centred at the origin
in $\R^N$ the 1-form $\omega$ has no singular points on $X\setminus\{0\}$. One can show that there exists
a $G$-invariant 1-form $\widetilde\omega$ on a neighbourhood of $B_\eps$ possessing the following
properties.
\begin{itemize}
\item[1)] The 1-form $\widetilde\omega$ coincides with $\omega$ on a neighbourhood of
the sphere $S_\eps = \partial B_\eps$. 
\item[2)] The 1-form $\widetilde\omega$ is radial on $(X, 0)$ at the origin. 
\item[3)] In a neighbourhood of each singular point $x_0\in (X\cap B_\eps)\setminus\{0\}$, $x_0 \in
X_i$, $\dim X_i=k$, the 1-form $\widetilde{\omega}$ looks as follows. There exists a
(local) analytic diffeomorphism $h: (\R^N, \R^k, 0) \to (\R^N, X_i, x_0)$ such that
$h^\ast\widetilde{\omega} = \pi_1^\ast \widetilde{\omega}_1 + \pi_2^\ast \widetilde{\omega}_2$,
where $\pi_1$ and $\pi_2$ are the natural projections $\pi_1: \R^N \to \R^k$
and $\pi_2: \R^N \to \R^{N-k}$ respectively, 
$\widetilde{\omega}_1$ is the germ of a 1-form on $(\R^k,0)$ with an isolated singular point
at the origin, and $\widetilde{\omega}_2$ is a radial 1-form on $(\R^{N-k},0)$.
\end{itemize}
The usual index $\ind(\widetilde{\omega}_{\vert X_i}; X_i,p)$ of the restriction
of the 1-form $\widetilde{\omega}$ to the corresponding stratum (a smooth manifold)
will be called the {\em multiplicity} of the 1-form $\widetilde{\omega}$ at the point $x_0$.
(If the origin is a stratum of the stratification itself (the zero-dimensional one),
the multiplicity of $\widetilde{\omega}$ at the origin is assumed to be equal to $1$.)

\begin{definition} \cite{EG-EuJM}
 The {\em equivariant radial index} ${\rm ind}^G_{\rm rad}(\omega;X,0)$ of the 1-form $\omega$ on
 the variety $X$ at the origin is the element of the Burnside ring $A(G)$ of the group $G$ 
 represented by the set of singular points of the 1-form $\widetilde{\omega}$ regarded with
 the multiplicities.
\end{definition}

\begin{remark}
 It is possible to assume that the restrictions of the 1-form $\widetilde{\omega}$ to the strata
 have only non-degenerate singular points. In this case all the multiplicities are equal to $\pm 1$.
\end{remark}

Let the space $(\C^N, 0)$ be endowed with an (analytic) action of a finite group $G$.
(Without loss of generality we may assume that the action is linear.)
Let $(X,0)\subset (\C^N, 0)$ be a germ of a $G$-invariant complex analytic variety of pure
dimension $n$ and let $\omega$ be a (continuous, complex-valued) $G$-invariant 1-form on $(\C^N, 0)$.

\begin{definition}
 The {\em equivariant radial index} ${\rm ind}^G_{\rm rad}(\omega;X,0)$ of the complex 1-form $\omega$ on
 the variety $X$ at the origin is defined by the equation
 $$
 {\rm ind}^G_{\rm rad}(\omega;X,0)=(-1)^n{\rm ind}^G_{\rm rad}({\rm Re\,}\omega;X,0)\in A(G)\,,
 $$
 where ${\rm Re\,}\omega$ is the real part of the 1-form $\omega$ (see the explanation of the sign, e.g.,
 in \cite{EGS}).
\end{definition}

One has a natural homomorphism $r_G:A(G)\to R(G)$ (``reduction'') sending a finite $G$-set $X$ to
the space of (complex valued) functions on it with the induced representation of the group $G$.

\begin{definition}
 The {\em reduced equivariant radial index} ${\rm rind}^G_{\rm rad}(\omega;X,0)$ of 
 a (real or complex) 1-form $\omega$ on a (real or complex) analytic variety $(X,0)$ 
 is
 $$
 {\rm rind}^G_{\rm rad}(\omega;X,0)=r_G\left({\rm ind}^G_{\rm rad}(\omega;X,0)\right)\in R(G)\,.
 $$
\end{definition}

As above, let the space $(\C^N, 0)$ be endowed with a linear action of a finite group $G$ and
let $(X,0)\subset (\C^N, 0)$ be a germ of a $G$-invariant complex analytic variety of pure
dimension $n$. Let us assume that $X$ has an isolated singular point at the origin.
Let $\omega$ be a $G$-invariant holomorphic 1-form on $(X,0)$ (that is the restriction to $(X,0)$
of a ($G$-invariant) holomorphic 1-form on $(\C^N, 0)$) without singular points (zeroes) outside of the origin.
Let us consider the complex $(\Omega^{\bullet}_{X,0}, \wedge\omega)$: 
\begin{equation}\label{hom-complex}
0 \to \Omega^0_{X,0}
\to \Omega^1_{X,0} \to ... \to \Omega^n_{X,0} \to 0\,, 
\end{equation}
where $\Omega^i_{X,0}$ are the modules of germs of differential $i$-forms on $(X,0)$
($\Omega^0_{X,0}={\mathcal O}_{X,0}$)
and the arrows are 
the exterior products by the 1-form $\omega$: $\wedge\omega$.
This complex has finite-dimensional cohomology groups $H^i(\Omega^\bullet_{X,0},\wedge\omega)$.
(This follows from the fact that the corresponding complex of sheaves consists of coherent
sheaves and its cohomologies are concentrated at the origin.)
All the spaces $\Omega^i_{X,0}$ and thus the cohomology groups $H^i(\Omega^\bullet_{X,0},\wedge\omega)$
carry natural representations of the group $G$. The definition of the ``usual'' (non-equivariant)
homological index of a 1-form from \cite{EGS} inspires the following definition.

\begin{definition} 
The {\em equivariant homological index}
$\,{\rm ind}_{\rm hom}^G(\omega; X,0)$ of the 1-form 
$\omega$ on $(X, 0)$ is defined by the equation
\begin{equation}\label{eq1}
{\rm ind}_{\rm hom}(\omega; X,0) 
= \sum_{i=0}^n (-1)^{n-i} [H^i(\Omega^\bullet_{X,0},\wedge\omega)]\in R(G)\,,
\end{equation}
where $[H^i(\Omega^\bullet_{X,0},\wedge\omega)]$ is the class of the (finite-dimensional)
$G$-module $H^i(\Omega^\bullet_{X,0},\wedge\omega)$ in the ring $R(G)$ of complex
representations of the group $G$.
\end{definition}

 The equivariant homological index satisfies the following law of conservation of number.
 Let $\omega'$ be a small $G$-invariant holomorphic deformation of the 1-form $\omega$.
 For a singular point $x$ of the 1-form $\omega'$ in a punctured neighbourhood
 of the origin $0$ in $X$, let $G_x=\{g\in G:gx=x\}$ be the isotropy subgroup of
 the point $p$ and let ${\rm ind}_{\rm hom}^{G_x}(\omega';X,x)\in R(G_x)$ be the
 equivariant homological index of the 1-form $\omega'$ at the point $x$.
 For a subgroup $H\subset G$, one has the natural (linear) map $I_H^G:R(H)\to R(G)$:
 the induction map (not a ring homomorphism). 
 
\begin{proposition}
One has the equation
$$
{\rm ind}_{\rm hom}^G(\omega; X,0) = {\rm ind}_{\rm hom}^G(\omega'; X,0)
 + \sum_{[x]\in (X\setminus\{0\})/G} I_{G_x}^G\left({\rm ind}_{\rm hom}^{G_x}(\omega';X,x)\right)\,,
$$ 
where the sum on the right hand side is over all orbits $[x]$ of singular points of the
1-form $\omega'$ in a small punctured neighbourhood of the origin $0$ in $X$, $x$ is a
representative of the orbit $[x]$.
\end{proposition}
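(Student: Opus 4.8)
The plan is to realize both sides of the asserted equation as the equivariant Euler characteristic, in $R(G)$, of the global sections of the cohomology sheaves of the complex $(\Omega^\bullet_X,\wedge\omega')$ over a small $G$-invariant representative of $(X,0)$, and then to prove that this equivariant Euler characteristic is unchanged under the $G$-invariant deformation $\omega\rightsquigarrow\omega'$. First I would fix a $G$-invariant representative $U$ of the germ, small enough that the zero set $\Sigma'$ of $\omega'$ on $U$ consists of the origin together with finitely many orbits $[x]$ with $x\in U\setminus\{0\}$; this is possible because $\omega'$ is $G$-invariant and $C^0$-close to $\omega$, whose only zero is $0$. Over $U$ the complex $\mathcal C^\bullet=(\Omega^\bullet_X,\wedge\omega')$ is a bounded complex of coherent $\O_X$-modules carrying a compatible $G$-action (since $\omega'$ is $G$-invariant), and its cohomology sheaves $\mathcal H^i=\mathcal H^i(\mathcal C^\bullet)$ are coherent and supported exactly on $\Sigma'$. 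Hence each $\Gamma(U,\mathcal H^i)=\bigoplus_{y\in\Sigma'}\mathcal H^i_y$ is a finite-dimensional $G$-module, with $\mathcal H^i_0=H^i(\Omega^\bullet_{X,0},\wedge\omega')$ and, at a nonsingular point $x$, $\mathcal H^i_x=H^i(\Omega^\bullet_{X,x},\wedge\omega')$.

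The localization step is then purely formal. Decompose $\Gamma(U,\mathcal H^i)$ as the $G$-fixed stalk at the origin plus, for each orbit $[x]$, the summand $\bigoplus_{gG_x\in G/G_x}\mathcal H^i_{gx}$ on which $G$ permutes the stalks transitively with stabiliser $G_x$ acting on $\mathcal H^i_x$. As a $G$-module this summand is induced from the $G_x$-module $\mathcal H^i_x$, so its class in $R(G)$ is $I^G_{G_x}([\mathcal H^i_x])$. Taking the alternating sum $\sum_i(-1)^{n-i}[\,\cdot\,]$ and using linearity of $I^G_{G_x}$ gives
$$\sum_i(-1)^{n-i}[\Gamma(U,\mathcal H^i)] = \indhom^G(\omega';X,0) + \sum_{[x]\in(X\setminus\{0\})/G} I^G_{G_x}\big(\indhom^{G_x}(\omega';X,x)\big).$$
So it remains to identify the left-hand side with $\indhom^G(\omega;X,0)$, i.e.\ to prove that this equivariant Euler characteristic is a deformation invariant.

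This invariance is the crux. I would view the deformation as a family over a small polydisc $S$ with trivial $G$-action, total space $U\times S$ with the diagonal $G$-action, and relative complex $\widetilde{\mathcal C}^\bullet=(\Omega^\bullet_{X\times S/S},\wedge\omega_s)$ of $G$-equivariant coherent sheaves. After shrinking, the relative zero locus $\Sigma\subset U\times S$ is finite over $S$, so by Grauert's coherence theorem the derived pushforward to $S$ has coherent $G$-equivariant cohomology and is represented by a perfect $G$-equivariant complex on $S$; since the $U$-fibres are Stein, base change identifies its fibre Euler characteristic at $s$ with $\sum_i(-1)^{n-i}[\Gamma(U,\mathcal H^i(\Omega^\bullet_X,\wedge\omega_s))]$. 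The class in $R(G)$ of a perfect $G$-equivariant complex is locally constant on $S$, hence constant as $S$ is connected; specialising to the value of $s$ giving $\omega$, whose only zero in $U$ is the origin, yields $\sum_i(-1)^{n-i}[\Gamma(U,\mathcal H^i(\Omega^\bullet_X,\wedge\omega))]=\indhom^G(\omega;X,0)$, completing the argument.

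The main obstacle is precisely this last invariance: one must upgrade the classical conservation of number for Euler characteristics of coherent complexes, as in \cite{EGS}, to the level of the class in $R(G)$. A second, more elementary route avoids equivariant Grauert machinery by reducing to the non-equivariant statement character by character: for a fixed $g\in G$ the value $\sum_i(-1)^{n-i}\,\mathrm{tr}\big(g\mid\Gamma(U,\mathcal H^i)\big)$ receives contributions only from the $g$-fixed points of $\Sigma'$, and its constancy under the deformation is a $g$-twisted conservation-of-number statement; since a class in $R(G)$ is determined by its characters, constancy of all these values gives the invariance in $R(G)$. The delicate point in this route is that $g$ acts only semilinearly over $\O_X$ (it covers the automorphism of $X$), so the twisting has to be handled on the finite-dimensional cohomology rather than sheaf-theoretically, and this is exactly where the argument requires care.
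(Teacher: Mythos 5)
Your proposal is correct and follows essentially the same route as the paper: the authors simply invoke the conservation of number for local Euler characteristics of coherent complexes from \cite{GG} and remark that one equivariantizes it by tracking the $G$-action on all sheaves and modules, which is precisely what you carry out (the orbit decomposition of $\Gamma(U,\mathcal H^i)$ producing the induction maps $I^G_{G_x}$, plus the $G$-equivariant deformation invariance of the Euler characteristic in $R(G)$). Your explicit treatment of the crux step --- via equivariant perfect complexes or, character by character, via a $g$-twisted conservation of number --- supplies detail the paper leaves implicit.
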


The proof can be obtained from the proof (of a more general statement) in \cite{GG}
by considering all the sheaves and modules there with the corresponding actions 
(representations) of the group $G$. If $X$ is non-singular (i.e.\ $(X,0)\cong(\C^n,0)$),
the only non-trivial cohomology group of the complex $(\Omega^{\bullet}_{X,0}, \wedge\omega)$
is in the dimension $n$. (In fact the same holds if $(X,0)$ is an isolated complete intersection
singularity: see Section~7.) If the 1-form $\omega$ on $(\C^n,0)$ is equal to
$\sum\limits_{i=1}^n f_ndz_n$, one has
$$
{\rm ind}_{\rm hom}^G(\omega; \C^n,0)=
[(\O_{\C^n,0}/\langle f_1,\ldots,f_n\rangle)dz_1\wedge\ldots\wedge dz_n]\in R(G).
$$
In this case the statement can be reduced to an equivariant version of the law of conservation
of number for the multiplicity $\dim_{\C}(\O_{\C^n,0}/\langle f_1,\ldots,f_n\rangle)$ of the map
$F=(f_1, \ldots, f_n):(\C^n,0)\to(\C^n,0)$.
A proof of the equivariant version can be obtained by an appropriate modification of a proof
of the traditional (non-equivariant) version, say, of the one given in \cite[Section~5]{AGV}.

{\bf 3. Equivariant radial and homological indices in the one-di\-men\-sio\-nal case.} 
A finite group $G$ acting faithfully on the line $(\C,0)$ is a cyclic one, say, $\Z_m$.
Let $\sigma$ be a generator of $\Z_m$. Without loss of generality we can assume that $\sigma$
acts on $\C$ by multiplication by $\sigma:=\exp(2\pi i/m)$. (The coincidence of notations for
a generator of $\Z_m$ and for $\exp(2\pi i/m)$ here and below does not lead to a confusion.
Moreover, we shall use the same notation for the described representation of the group $\Z_m$ on $\C$.)
A (non-trivial) $\Z_m$-invariant 1-form on $(\C,0)$ is right-equivalent to $z^{sm-1}dz$ (i.e., can
be reduced to this one by a change of the variable on $\C$).

\begin{proposition}
 The reduced radial and the homological equivariant indices of the 1-form $\omega_s=z^{sm-1}dz$
 (as elements of the ring $R(\Z_m)$) are equal to $s(1+\sigma+\sigma^2+\ldots+\sigma^{m-1})-1$.
\end{proposition}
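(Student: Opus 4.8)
The plan is to compute the two indices separately and check that both equal the claimed value $s(1+\sigma+\cdots+\sigma^{m-1})-1$.

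For the homological index I would use that $(\C,0)$ is smooth of dimension $n=1$, so only the top cohomology of the complex $(\Omega^\bullet_{\C,0},\wedge\omega_s)$ survives, and the formula recalled in Section~2 gives
\[
\indhom^G(\omega_s;\C,0)=\big[(\O_{\C,0}/\langle z^{sm-1}\rangle)\,dz\big]\in R(\Z_m).
\]
First I would write down the monomial basis $\{z^k\,dz:0\le k\le sm-2\}$ of this $(sm-1)$-dimensional $\Z_m$-module and record the weight of each generator: since $\sigma$ acts on $z$ and on $dz$ by multiplication by $\sigma$, the line spanned by $z^k\,dz$ carries the character $\sigma^{k+1}$, so $\indhom^G(\omega_s;\C,0)=\sum_{j=1}^{sm-1}\sigma^{j}$ in $R(\Z_m)$. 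Reducing the exponents modulo $m$ is then pure bookkeeping: the residue $0$ occurs $s-1$ times (for $j=m,2m,\dots,(s-1)m$) and each residue $1,\dots,m-1$ occurs $s$ times, so the sum equals $(s-1)+s(\sigma+\cdots+\sigma^{m-1})=s(1+\sigma+\cdots+\sigma^{m-1})-1$. (The opposite sign convention for the action on $dz$ merely permutes the nonzero characters and leaves the answer unchanged.)

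For the reduced radial index I would work through the defining real construction. By definition $\indrad^G(\omega_s;\C,0)=(-1)^1\indrad^G(\mathrm{Re}\,\omega_s;\C,0)$, so I would take a $\Z_m$-invariant real $1$-form $\widetilde\omega$ agreeing with $\mathrm{Re}\,\omega_s$ near $S_\eps$ and radial near the origin (interpolating with the invariant radial form $d|z|^2=2\,\mathrm{Re}(\bar z\,dz)$). The decisive structural observations are: the origin is the unique fixed point of the action, is forced to be singular for any invariant form, and by the stratification convention enters the Burnside element with multiplicity $1$; and every other singular point lies in the free locus $\C\setminus\{0\}$, hence the non-fixed singular points form a disjoint union of free $\Z_m$-orbits. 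Since $r_{\Z_m}$ sends the class of a free orbit to the regular representation $1+\sigma+\cdots+\sigma^{m-1}$ and the fixed point to the trivial representation, I would not need the individual orbit structure at all: only the total index of the non-fixed points matters, and it is pinned down by Poincar\'e--Hopf. The total index of $\widetilde\omega$ in $B_\eps$ equals $\mathrm{ind}_0(\mathrm{Re}\,\omega_s)=-(sm-1)$, while the radial singularity at the origin has index $+1$; thus the free singular points carry total index $-sm$, contributing coefficient $-s$ to the regular representation. Applying $r_{\Z_m}$ gives $1-s(1+\sigma+\cdots+\sigma^{m-1})$, and the factor $(-1)^1$ produces $s(1+\sigma+\cdots+\sigma^{m-1})-1$.

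The routine inputs are the residue count modulo $m$ and the two local index computations $\mathrm{ind}_0(\mathrm{Re}(z^k\,dz))=-k$ (a winding-number calculation, the associated vector field being $z\mapsto\bar z^{\,k}$) and $\mathrm{ind}_0(\text{radial})=+1$. I expect the main conceptual point to be the treatment of the origin in the radial construction: one must argue that the $\Z_m$-fixed origin is unavoidably singular, isolate its contribution with multiplicity $+1$, and then exploit that $r_{\Z_m}$ discards all information about the free orbits except their total index. Keeping the signs consistent (the $(-1)^n$ convention together with $\mathrm{ind}_0(\mathrm{Re}\,\omega_s)=-(sm-1)$) is the other delicate spot; once these are settled, the equality of the two indices and their common value follow at once.
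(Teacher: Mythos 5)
Your proposal is correct and follows essentially the same route as the paper: the homological index is computed from the monomial basis $dz, z\,dz,\dots,z^{sm-2}dz$ of $\Omega^1_{\C,0}/\omega_s\wedge\Omega^0_{\C,0}$ with $\sigma$ acting by $\sigma^{j+1}$ on $z^j\,dz$, and the radial index from a deformation that is radial at the origin (contributing $1$) with the remaining singular points on free orbits, whose total contribution $-s\,(1+\sigma+\cdots+\sigma^{m-1})$ is pinned down by the total index $1-sm$ of $\mathrm{Re}\,\omega_s$. Your write-up merely makes explicit the Poincar\'e--Hopf bookkeeping and the residue count that the paper leaves implicit.
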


\begin{proof}
 The usual (non-equivariant) index of this (complex) 1-form is equal to $sm-1$. Therefore the index
 of its real part is equal to $1-sm$. A $G$-equivariant 1-form $\widetilde{\omega}_s$ from the
 definition of the radial index of the 1-form ${\rm Re\,}\omega_s$ is radial at the origin of $\C\cong\R^2$
 and has free orbits of singular points outside of it. Therefore
 $$
 \rindrad^G({\rm Re\,}\omega_s; \R^2,0)= 1 - sI_{(e)}^{\Z_m}(1)=1-s(1+\sigma+\sigma^2+\ldots+\sigma^{m-1})\,.
 $$
 Thus $\rindrad^G(\omega_s; \C,0)=s(1+\sigma+\sigma^2+\ldots+\sigma^{m-1})-1$.
 
 A basis of $\Omega^1_{\C,0}/\omega_s\wedge \Omega^0_{\C,0}$ consists of the (monomial) 1-forms
 $dz$, $zdz$, \dots, $z^{sm-2}dz$. On the element $z^jdz$ the generator $\sigma$ acts by the representation
 $\sigma^{j+1}$. This gives $\indhom^G(\omega_s; \C,0)=s(1+\sigma+\sigma^2+\ldots+\sigma^{m-1})-1$.
\end{proof}

\begin{proposition}
 The reduced radial and the homological equivariant index of the 1-form $\omega_s=z^{sm-1}dz$, i.\ e.,
${\rm ind}=s(1+\sigma+\sigma^2+\ldots+\sigma^{m-1})-1$, is not a divisor of zero in $R(\Z_m)$.
\end{proposition}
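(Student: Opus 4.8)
The plan is to replace the representation ring by a concrete model and reduce the zero-divisor question to a statement about characters. For the cyclic group one has the ring isomorphism $R(\Z_m)\cong\Z[\sigma]/(\sigma^m-1)$, under which $R(\Z_m)$ is a free $\Z$-module of rank $m$ with basis $1,\sigma,\ldots,\sigma^{m-1}$, and the element in question becomes
\[
\mathrm{ind}=s\,N-1,\qquad N:=1+\sigma+\sigma^2+\cdots+\sigma^{m-1}.
\]
Here $N$ is the class of the regular representation. I may assume $sm\ge 2$, since $sm=1$ gives $\omega_s=dz$, which has no singular point and is excluded from the setting.

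Next I would introduce the character homomorphisms. Setting $\zeta=\exp(2\pi i/m)$, for each $k\in\{0,1,\ldots,m-1\}$ there is a ring homomorphism $\phi_k:R(\Z_m)\to\C$ determined by $\sigma\mapsto\zeta^k$ (this is the evaluation of a virtual character on the $k$-th power of the generator). Assembling them yields a ring homomorphism
\[
\Phi=(\phi_0,\ldots,\phi_{m-1}):R(\Z_m)\longrightarrow\C^m
\]
into a \emph{product of fields}. The map $\Phi$ is injective: a representative $\sum_{j}a_j\sigma^j$ of degree $<m$ killed by every $\phi_k$ gives a polynomial vanishing at all $m$ distinct $m$-th roots of unity, hence the zero class. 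This injectivity, which is just the classical fact that a representation of a finite group is determined by its character, is the only genuinely structural input; I expect it to be the step most worth stating carefully, though it is standard. Everything else is a short computation.

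The conclusion would then run as follows. In $\C^m$ an element is a non-zero-divisor exactly when all of its coordinates are nonzero; and since $\Phi$ is an injective ring homomorphism, any relation $\mathrm{ind}\cdot\beta=0$ with $\beta\in R(\Z_m)$ forces $\Phi(\mathrm{ind})\,\Phi(\beta)=0$, hence $\Phi(\beta)=0$ and $\beta=0$, as soon as every coordinate $\phi_k(\mathrm{ind})$ is nonzero. It remains only to evaluate these coordinates. For the regular representation $\phi_k(N)=\sum_{j=0}^{m-1}\zeta^{jk}$, which equals $m$ for $k=0$ and vanishes for $1\le k\le m-1$ (a geometric sum at a nontrivial root of unity). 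Therefore $\phi_0(\mathrm{ind})=sm-1$ and $\phi_k(\mathrm{ind})=-1$ for $k\ge 1$, and both are nonzero under the standing assumption $sm\ge 2$. Hence all coordinates of $\Phi(\mathrm{ind})$ are nonzero and $\mathrm{ind}$ is a non-zero-divisor in $R(\Z_m)$. The only point needing attention is the degenerate value $sm=1$, which is ruled out because it does not correspond to a $1$-form with an isolated zero.
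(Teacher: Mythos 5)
Your proof is correct and is essentially the paper's argument in a different basis: the character values $\phi_0({\rm ind})=sm-1$ and $\phi_k({\rm ind})=-1$ for $k\ge 1$ are exactly the eigenvalues of the multiplication-by-${\rm ind}$ matrix $sI-E$ that the paper computes in order to show that this matrix is non-degenerate. Your explicit exclusion of the degenerate case $sm=1$ is a small point the paper leaves implicit, and is handled correctly.
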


\begin{proof}
 The table of the multiplication of the basis elements $\sigma^i$, $i=0,1,\ldots, m-1$ by the element
 ${\rm ind}$ is given by the $(m\times m)$-matrix $sI-E$, where $I$ is the matrix all whose entries are equal
 to $1$, $E$ is the unit matrix. This matrix is non-degenerate (since its eigenvalues are $s(m-1)$ and $(-1)$,
 the latter one with the multiplicity $(m-1)$).
\end{proof}

{\bf 4. Sebastiani--Thom formula for the equivariant indices.} 
Let $\C^n$ and $\C^m$ be spaces with actions (representations) of the group $G$ and let
$\omega$ and $\eta$ be $G$-invariant 1-forms on $(\C^n,0)$ and on $(\C^m,0)$ respectively
with isolated singular points at the origin.
One has the Sebastiani--Thom (direct) sum $\omega\oplus\eta$ of the 1-forms $\omega$ and $\eta$
(a 1-form on $(\C^n\oplus\C^m,0)\cong (\C^{n+m},0)$)
defined by the equation $(\omega\oplus\eta)_{(x,y)}(u,v)=\omega_x(u)+\eta_y(v)$ ($x\in\C^n$, $y\in\C^m$,
$u\in T_x\C^n\cong\C^n$, $v\in T_y\C^m\cong\C^m$).

\begin{theorem}\label{S-T} (a version of the Sebastiani--Thom theorem)
 One has the equations
\begin{eqnarray*}
 \indrad^{G}(\omega\oplus\eta; \C^{n+m},0)&=&\indrad^{G}(\omega; \C^n,0)\cdot\indrad^{G}(\eta; \C^m,0)\in A(G)\,,\\
 \indhom^{G}(\omega\oplus\eta; \C^{n+m},0)&=&\indhom^{G}(\omega; \C^n,0)\cdot\indhom^{G}(\eta; \C^m,0)\in R(G)\,.
\end{eqnarray*}
\end{theorem}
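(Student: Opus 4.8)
The plan is to establish the two equations independently, since at this stage the radial and homological indices are not yet known to coincide (their coincidence is the paper's main theorem, proved later by induction with the present theorem as a tool). For the radial equation I would argue geometrically, reducing to real $1$-forms and exploiting the fact that the zero set of a direct sum is a product; for the homological equation I would argue purely algebraically, identifying the relevant cohomology module as a tensor product.

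For the radial equation I would first pass to real parts. By definition $\indrad^{G}(\omega\oplus\eta;\C^{n+m},0)=(-1)^{n+m}\indrad^{G}({\rm Re\,}(\omega\oplus\eta);\R^{2(n+m)},0)$, and one checks directly from the defining formula that ${\rm Re\,}(\omega\oplus\eta)={\rm Re\,}\omega\oplus{\rm Re\,}\eta$ as real $1$-forms on $\R^{2n}\oplus\R^{2m}$. Since $(-1)^{n+m}=(-1)^n(-1)^m$, the complex statement reduces to the analogous multiplicativity for real $G$-invariant $1$-forms $\alpha$ on $(\R^a,0)$ and $\beta$ on $(\R^b,0)$, namely $\indrad^{G}(\alpha\oplus\beta;\R^{a+b},0)=\indrad^{G}(\alpha;\R^a,0)\cdot\indrad^{G}(\beta;\R^b,0)$ in $A(G)$. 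To prove the latter, I would choose $G$-invariant radial deformations $\widetilde\alpha$ and $\widetilde\beta$ with only nondegenerate singular points, computing the two radial indices, and arranged so that $\widetilde\alpha=\alpha$ for $|x|\geq\eps_1$ and $\widetilde\beta=\beta$ for $|y|\geq\eps_2$. The decisive observations are: a point $(x,y)$ is a zero of $\widetilde\alpha\oplus\widetilde\beta$ exactly when $x$ is a zero of $\widetilde\alpha$ and $y$ a zero of $\widetilde\beta$, so the singular set of the sum is the product of the two singular sets with the diagonal $G$-action; the direct sum of $1$-forms radial at the origin is again radial there (radiality gives strict positivity on the tangent of every nontrivial arc, so no cancellation occurs); and the usual local index of $1$-forms is multiplicative under direct sums, so the multiplicity of $\widetilde\alpha\oplus\widetilde\beta$ at $(x,y)$ is the product of the multiplicities at $x$ and $y$. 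Since the product of two finite $G$-sets with multiplicities represents precisely the product in the Burnside ring $A(G)$, this gives the desired equation.

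For the homological equation I would use that on a smooth germ only the top cohomology of $(\Omega^{\bullet}_{\C^n,0},\wedge\omega)$ is nonzero, so $\indhom^{G}(\omega;\C^n,0)=[H^n]$ with $H^n=\Omega^n_{\C^n,0}/(\omega\wedge\Omega^{n-1}_{\C^n,0})$. Writing $\omega=\sum_{i=1}^n f_i\,dx_i$ and $\eta=\sum_{j=1}^m g_j\,dy_j$, one has $\omega\wedge\Omega^{n-1}=\langle f_1,\ldots,f_n\rangle\,dx_1\wedge\cdots\wedge dx_n$, whence $H^n\cong(\O_{\C^n,0}/\langle f_1,\ldots,f_n\rangle)\,dx_1\wedge\cdots\wedge dx_n$ as a $G$-module, and similarly for $\eta$. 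For the sum, $\omega\oplus\eta=\sum f_i\,dx_i+\sum g_j\,dy_j$, so its top cohomology is $(\O_{\C^{n+m},0}/\langle f_1,\ldots,f_n,g_1,\ldots,g_m\rangle)\,dx\wedge dy$. Because the $G$-action on $\C^{n+m}=\C^n\oplus\C^m$ respects the splitting, the $x$- and $y$-variables do not mix, the ideals $\langle f_i\rangle$ and $\langle g_j\rangle$ are $G$-stable, and since the two factor algebras are finite-dimensional the local algebra factors $G$-equivariantly as $\O_{\C^{n+m},0}/\langle f_i,g_j\rangle\cong(\O_{\C^n,0}/\langle f_i\rangle)\otimes_{\C}(\O_{\C^m,0}/\langle g_j\rangle)$, while $dx\wedge dy$ corresponds to the tensor product of the generators (its character is the product of the determinant characters). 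Hence $H^{n+m}\cong H^n\otimes_{\C}H^m$ as $G$-modules, which is exactly the product $[H^n]\cdot[H^m]$ in $R(G)$.

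The main obstacle is the boundary bookkeeping in the radial case: the product deformation $\widetilde\alpha\oplus\widetilde\beta$ does not literally coincide with $\alpha\oplus\beta$ near the bounding sphere $S^{a+b}_\eps$, matching it only outside the product ball $B^a_{\eps_1}\times B^b_{\eps_2}$, so one must verify that it nevertheless computes the radial index of the sum. Taking $\eps=\sqrt{\eps_1^2+\eps_2^2}$, every point of $S^{a+b}_\eps$ satisfies $|x|\geq\eps_1$ or $|y|\geq\eps_2$; one checks that $\widetilde\alpha\oplus\widetilde\beta$ then has no zeros on $S^{a+b}_\eps$, and the factorwise straight-line homotopies to $\alpha$ and $\beta$ keep the sum nowhere-zero on the sphere (on the side where the argument already exceeds the relevant radius, the corresponding factor equals $\alpha$ or $\beta$, which is nonzero away from the origin). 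Thus $\widetilde\alpha\oplus\widetilde\beta$ is $G$-homotopic to $\alpha\oplus\beta$ through nowhere-zero $G$-invariant forms near the sphere, and the invariance of the radial index under such boundary homotopies legitimizes the computation. By contrast, the homological equation is essentially a $G$-equivariant K\"unneth statement for Artinian local algebras and should present no serious difficulty.
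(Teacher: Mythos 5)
Your proof is correct and follows essentially the same route as the paper: a product construction showing the singular $G$-set of the deformed direct sum is the product of the two singular $G$-sets (with multiplicities multiplying) for the radial index, and the identification of the only nontrivial (top) cohomology of $(\Omega^\bullet_{\C^{n+m},0},\wedge(\omega\oplus\eta))$ with the tensor product of the two top cohomologies for the homological index. The only cosmetic difference is at the boundary: where you invoke homotopy invariance of the radial index on the sphere $S_\eps$, the paper realizes that same straight-line homotopy as an explicit $G$-invariant interpolating $1$-form $(1-\psi(r))\,(\omega'\oplus\eta')+\psi(r)\,(\widetilde{\omega}\oplus\widetilde{\eta})$ with a radial cutoff $\psi$, which literally coincides with ${\rm Re\,}(\omega\oplus\eta)$ near the sphere and is therefore directly admissible in the definition.
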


\begin{proof}
 For the radial index this follows from the following construction. Let $\widetilde{\omega}$ and
 $\widetilde{\eta}$ be 1-forms described in the definition of the equivariant radial index (corresponding
 to the 1-forms $\omega':={\rm Re\,}\omega$ and $\eta':={\rm Re\,}\eta$ respectively).
 Without loss of generality we may assume that
 $\widetilde{\omega}$ and $\widetilde{\eta}$ are defined on the balls $B_{\eps}^{2n}$ and $B_{\eps}^{2m}$
 (centred at the origin) in $\C^n$ and $\C^m$ respectively of the same radius $\eps$ and that they coincide
 with $\omega'$ and $\eta'$ respectively outside of the balls of radius $\eps/4$.
 Let $\psi(r)$ be a (continuous) function on $[0,\eps]$ such that $0\le\psi(r)\le 1$, $\psi(r)\equiv 1$
 for $r\le\eps/2$, $\psi(r)\equiv 0$ for $r\ge 3\eps/4$.
 Let us define a 1-form $\widetilde{\omega\oplus\eta}$ on $B_{\eps}^{2(n+m)}\subset\C^{n+m}$
 by the equation
 $$
 \widetilde{\omega\oplus\eta}_{(x,y)}=
 (1-\psi(r))\omega'_x\oplus\eta'_y+\psi(r)\widetilde{\omega}_x\oplus\widetilde{\eta}_y\,,
 $$
 where $x\in\C^n$, $y\in\C^m$, $r:=\sqrt{\Vert x\Vert^2+\Vert y\Vert^2}$.
 One can see that the 1-form $\widetilde{\omega\oplus\eta}$ considered on the ball
 $B_{\eps}^{2(n+m)}\subset\C^{n+m}$ is appropriate for the definition of the equivariant
 radial index of the 1-form ${\rm Re\,}(\omega\oplus\eta)=\omega'\oplus\eta'$ (i.\ e., satisfies
 the conditions 1--3 above). Moreover, the set of its singular points (considered as a $G$-set)
 is the direct product of the sets of singular points of the 1-forms $\widetilde{\omega}$
 and $\widetilde{\eta}$. (This follows from the fact that, for a point $(x,y)$ outside of
 $B_{\eps/2}^{2n}\times B_{\eps/2}^{2m}$ either $\widetilde{\omega}_x=\omega'_x$ or
 $\widetilde{\eta}_y=\eta'_y$ and therefore the 1-form $\widetilde{\omega\oplus\eta}$ does not vanish.)
 
 For the homological index this follows from
 the fact that for a 1-form on a non-singular manifold the only non-trivial cohomology group of the complex
 (\ref{hom-complex}) is in the highest dimension and one has
 $$
 \Omega^{n+m}_{\C^{n+m},0}/(\omega\oplus\eta)\wedge\Omega^{n+m-1}_{\C^{n+m},0}=
 (\Omega^{n}_{\C^{n},0}/\omega\wedge\Omega^{n-1}_{\C^{n},0})\otimes
 (\Omega^{m}_{\C^{m},0}/\eta\wedge\Omega^{m-1}_{\C^{m},0})
 $$
 (as spaces with $G$-representations).
\end{proof}

\begin{remark}
For the radial index the same equation holds for two 1-forms on (singular) varieties
and for the corresponding 1-form (the direct sum) on the product of the varieties.
For the homological index defined here, the corresponding equation does not make sense.
Here the homological index is defined for a 1-form on a variety with an isolated singular point,
whence the product of two varieties with isolated singular points has non-isolated singular points.
\end{remark}

\begin{corollary}
 One has
$$
\rindrad^{G}(\omega\oplus\eta; \C^{n+m},0)=\rindrad^{G}(\omega; \C^n,0)\cdot\rindrad^{G}(\eta; \C^m,0)\in R(G)\,.
$$
\end{corollary}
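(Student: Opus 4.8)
The plan is to deduce this equation directly from the multiplicativity of the (unreduced) equivariant radial index in the Burnside ring established in Theorem~\ref{S-T}, by applying the reduction homomorphism $r_G:A(G)\to R(G)$. Since $\rindrad^G$ is by definition the composition $r_G\circ\indrad^G$, the corollary will follow as soon as one knows that $r_G$ respects products.

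Thus the single substantive step is to verify that $r_G:A(G)\to R(G)$ is a \emph{ring} homomorphism and not merely an additive one. Recall that multiplication in the Burnside ring $A(G)$ is induced by the Cartesian product of finite $G$-sets, while $r_G$ sends a $G$-set $Z$ to the representation on the space $\C[Z]$ of complex-valued functions on $Z$. The required compatibility reduces to the natural $G$-equivariant isomorphism $\C[Z_1\times Z_2]\cong\C[Z_1]\otimes_{\C}\C[Z_2]$: a function on a product is determined by its values on pairs, and the diagonal $G$-action on the tensor product matches the $G$-action on functions on the product. Since the product in $R(G)$ is the tensor product of representations, this gives $r_G([Z_1]\cdot[Z_2])=r_G([Z_1])\cdot r_G([Z_2])$, and by additive extension $r_G$ is multiplicative on all of $A(G)$.

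With this in hand the computation is immediate: applying $r_G$ to the first equation of Theorem~\ref{S-T} yields
\[
\rindrad^{G}(\omega\oplus\eta; \C^{n+m},0)
= r_G\bigl(\indrad^{G}(\omega; \C^n,0)\cdot\indrad^{G}(\eta; \C^m,0)\bigr)
= \rindrad^{G}(\omega; \C^n,0)\cdot\rindrad^{G}(\eta; \C^m,0),
\]
which is the assertion. No genuine obstacle arises here; the only point requiring care is the multiplicativity of $r_G$, a standard feature of the reduction map from the Burnside ring to the representation ring. (In the present setting one may alternatively read the claim off directly from the description of the singular set of $\widetilde{\omega\oplus\eta}$ in the proof of Theorem~\ref{S-T} as the product $G$-set of the singular sets of $\widetilde\omega$ and $\widetilde\eta$, since the space of functions on a product $G$-set is the tensor product of the spaces of functions on the factors.)
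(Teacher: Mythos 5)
Your argument is correct and is exactly the route the paper intends: the corollary is stated without proof precisely because it follows from applying the reduction $r_G$ to the first equation of Theorem~\ref{S-T}, and your verification that $r_G$ is multiplicative (via $\C[Z_1\times Z_2]\cong\C[Z_1]\otimes_{\C}\C[Z_2]$ with the diagonal action) is the one substantive point and is handled correctly.
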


{\bf 5. Destabilization of singular points.}
Let $\C^m=\C^{m-k}\oplus\C^{k}$ be a $G$-invariant decomposition of the space $\C^m$ with a
representation of the group $G$ such that the $k$th exterior power of the action of $G$ on $\C^{k}$
(i.e., the action of $G$ on the space of $k$-forms on $\C^{k}$) is trivial. Let $\omega$ be a
$G$-invariant holomorphic 1-form on $(C^m,0)$ such that its restriction to $(\C^{k},0)$ is non-degenerate.

\begin{proposition}\label{prop1}
 There exists a $G$-invariant complex analytic 1-form $\eta$ on $(\C^{m-k},0)$ such that
 $\indhom^{G}(\omega; \C^m,0)=\indhom^{G}(\eta; \C^{m-k},0)$,
 $\indrad^{G}(\omega; \C^m,0)=\indrad^{G}(\eta; \C^{m-k},0)$, and therefore
 $\rindrad^{G}(\omega; \C^m,0)=\rindrad^{G}(\eta; \C^{m-k},0)$.
\end{proposition}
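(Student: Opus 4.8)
The plan is to combine the Sebastiani--Thom theorem (Theorem~\ref{S-T}) with a $G$-invariant deformation of $\omega$ to a Sebastiani--Thom sum $\eta\oplus\theta$, where $\theta$ is a non-degenerate $G$-invariant $1$-form on $(\C^k,0)$; the hypothesis on $\Lambda^k\C^k$ then forces the index of the factor $\theta$ to be the unit of the relevant ring, so the index of $\omega$ collapses to that of $\eta$. Choose $G$-invariant linear coordinates $(x,y)$, $x=(x_1,\dots,x_{m-k})$ on $\C^{m-k}$ and $y=(y_1,\dots,y_k)$ on $\C^k$ (possible since the splitting is $G$-invariant), and write $\omega=\sum_i a_i\,dx_i+\sum_j b_j\,dy_j$ with $a_i,b_j\in\O_{\C^m,0}$. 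Non-degeneracy of $\omega|_{\C^k}$ means exactly that the matrix $(\partial b_j/\partial y_l)(0,0)$ is invertible.

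First I would straighten the critical section. Since $\det(\partial b_j/\partial y_l)(0,0)\ne 0$, the holomorphic implicit function theorem yields a unique germ $\phi\colon(\C^{m-k},0)\to(\C^k,0)$ with $b_j(x,\phi(x))\equiv 0$; uniqueness together with the $G$-invariance of $\omega$ forces $\phi$ to be $G$-equivariant, so the shear $(x,y)\mapsto(x,y-\phi(x))$ is a $G$-equivariant change of coordinates preserving the block splitting, and after it I may assume $b_j(x,0)\equiv 0$. I then set $\eta:=\sum_i a_i(x,0)\,dx_i$ on $(\C^{m-k},0)$ (the restriction of $\omega$ to the section $\{y=0\}$) and $\theta:=\sum_j b_j(0,y)\,dy_j$ on $(\C^k,0)$. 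Both are $G$-invariant, $\theta$ is non-degenerate, and because $\omega$ has an isolated zero the germ $\eta$ has an isolated zero at the origin (on $\{y=0\}$ all $b_j$ vanish, so every zero of $\eta$ is a zero of $\omega$).

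Next I would connect $\omega$ to $\eta\oplus\theta$ by the $G$-invariant family $\omega_t:=\sum_i a_i(x,ty)\,dx_i+\sum_j b_j(tx,y)\,dy_j$, $t\in[0,1]$, with $\omega_1=\omega$ and $\omega_0=\eta\oplus\theta$; its $G$-invariance is checked directly from the transformation laws of $a_i,b_j$, since the scalings $x\mapsto tx$ and $y\mapsto ty$ commute with the linear action. The key point is that each $\omega_t$ keeps its zero isolated, and only at the origin, inside a fixed small ball: the equations $b_j(tx,y)=0$, together with $b_j(\cdot,0)\equiv 0$ and the invertibility of $\partial_y b$, force $y=0$ by uniqueness in the implicit function theorem, after which $a_i(x,0)=0$ forces $x=0$. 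Hence no zeros escape, so the equivariant law of conservation of number for the homological index stated above gives $\indhom^G(\omega;\C^m,0)=\indhom^G(\eta\oplus\theta;\C^m,0)$, and the analogous conservation law for the radial index gives the same identity for $\indrad^G$.

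Finally I would apply Theorem~\ref{S-T} to split $\indhom^G(\eta\oplus\theta)=\indhom^G(\eta)\cdot\indhom^G(\theta)$, and likewise for $\indrad^G$, and evaluate the factor of $\theta$. Since $\theta$ is non-degenerate, $\Omega^k_{\C^k,0}/\theta\wedge\Omega^{k-1}_{\C^k,0}\cong\bigl(\O_{\C^k,0}/\langle b_1(0,\cdot),\dots,b_k(0,\cdot)\rangle\bigr)\,dy_1\wedge\cdots\wedge dy_k\cong(\Lambda^k\C^k)^{\ast}$ as $G$-modules (the quotient ring is the residue field $\C$ with trivial action), and the hypothesis that $G$ acts trivially on $\Lambda^k\C^k$ makes this the trivial representation, so $\indhom^G(\theta;\C^k,0)=1\in R(G)$; similarly $\indrad^G(\theta;\C^k,0)=1\in A(G)$, since the only zero of $\theta$ is the fixed origin and it is non-degenerate. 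Multiplying out yields $\indhom^G(\omega;\C^m,0)=\indhom^G(\eta;\C^{m-k},0)$ and $\indrad^G(\omega;\C^m,0)=\indrad^G(\eta;\C^{m-k},0)$, whence the statement for $\rindrad^G$ follows by applying the homomorphism $r_G$. The main obstacle is the middle step: arranging the coordinate normalization $G$-equivariantly and then proving that the whole family $\omega_t$ keeps its zero isolated at the origin, so that conservation of number produces an honest equality with no correction terms; the collapse of the $\theta$-factor is where the hypothesis on $\Lambda^k\C^k$ is essential, and is otherwise a short representation-theoretic computation.
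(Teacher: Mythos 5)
Your overall strategy is the one the paper itself uses: the implicit-function-theorem shear is exactly the paper's $G$-equivariant automorphism $H(x,y)=(x,y+f(x))$, and the reduction of $\omega$ to the Sebastiani--Thom sum $\eta\oplus\theta$ followed by Theorem~\ref{S-T} mirrors the paper's cut-off interpolation (for the radial index) and its ideal computation $\langle\varphi_{m-k+1},\dots,\varphi_m\rangle=\langle z_{m-k+1},\dots,z_m\rangle$ (for the homological index). Your replacement of those two steps by the single rescaling homotopy $\omega_t$ plus the law of conservation of number is a legitimate and slightly cleaner variant, and your verification that $\omega_t$ has no zeros off the origin is correct. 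You are also more explicit than the paper about where the hypothesis on $\Lambda^k\C^k$ enters the homological computation (the residual factor $[dy_1\wedge\dots\wedge dy_k]=[\Lambda^k(\C^k)^*]$), which is a genuine improvement in exposition.

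There is, however, one step whose justification as written is wrong: the claim that $\indrad^{G}(\theta;\C^k,0)=1\in A(G)$ ``since the only zero of $\theta$ is the fixed origin and it is non-degenerate.'' Non-degeneracy at a fixed point does \emph{not} imply equivariant radial index $1$: take $G=\Z_2$ acting on $\C$ by $z\mapsto-z$ and $\theta=z\,dz$; this is invariant and non-degenerate with its only zero at the fixed origin, yet by Section~3 of the paper ($m=2$, $s=1$) its reduced radial index is $\sigma\neq 1$, so its radial index in $A(\Z_2)$ is $[\Z_2/e]-[\Z_2/\Z_2]\neq 1$. The hypothesis that $G$ acts trivially on $\Lambda^k\C^k$ is therefore essential for the radial factor as well, not only for the homological one: the existence of the non-degenerate invariant $1$-form $\theta$ makes the $G$-module $\C^k$ self-dual (the matrix $(\partial b_j/\partial y_l)(0)$ is an invariant non-degenerate bilinear form), and together with $\det=1$ this forces every fixed subspace $(\C^k)^{\langle g\rangle}$ to have even complex codimension; only then does the restriction of $\mathrm{Re}\,\theta$ to each fixed subspace have local index $+1$ and the radial index collapse to $[G/G]$. (The paper is equally terse at this point, asserting the last equality $\indrad^G(\omega)=\indrad^G((H^*\omega)|_{\C^{m-k}})$ without comment, but your stated reason, taken literally, proves a false statement, so you should supply the argument above rather than appeal to non-degeneracy alone.)
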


\begin{proof}
 For small $x\in\C^{m-k}$, the restriction of the 1-form $\omega$ to the affine subspace
 $\{x\}\times\C^{k}$ has one non-degenerate zero $\{x\}\times f(x)$ in a neighbourhood
 of $\{x\}\times\{0\}$, where $f$ is a $G$-equivariant analytic map from $(\C^{m-k},0)$
 to $(\C^{k},0)$. Let $H:\C^{m-k}\oplus\C^{k}\to\C^{m-k}\oplus\C^{k}$ be defined by
 $H(x,y)=(x,y+f(x))$. The map $H$ is a local $G$-equivariant holomorphic automorphism of
 $\C^{m-k}\oplus\C^{k}$.
 The 1-form $H^*\omega$ has the same equivariant radial and homological indices as $\omega$.
 Moreover, for any $x\in(\C^{m-k},0)$, the restriction of $H^*\omega$ to $\{x\}\times\C^{k}$
 has a non-degenerate singular point at the origin $\{x\}\times\{0\}$. If $\varphi_i(\overline{z})$,
 $i=1,\ldots, m$, are the components of the 1-form $H^*\omega$
 ($H^*\omega=\sum\limits_{i=1}^m\varphi_i(\overline{z})dz_1$), then the ideal in $\O_{\C_m,0}$ generated
 by $\varphi_{m-k+1}(\overline{z})$, \dots, $\varphi_{m}(\overline{z})$ coincides with the ideal
 $\langle z_{m-k+1}, \ldots, z_m\rangle$. Therefore
 $$
 \O_{\C^m,0}/\langle \varphi_1, \ldots, \varphi_m\rangle=
 \O_{\C^{m-k},0}/\langle {\varphi_1}_{\vert(\C^{m-k},0)}, \ldots, {\varphi_{m-k}}_{\vert(\C^{m-k},0)}\rangle\,.
 $$
 This implies that $\indhom^{G}(\omega; \C^m,0)=\indhom^{G}((H^*\omega)_{\vert(\C^{m-k},0)}; \C^{m-k},0)$.
 
 Let $\pi_1$ and $\pi_2$
 be the natural projections of $T_p\C^m\cong\C^m$ to $T_p\C^{m-k}\cong\C^{m-k}$ and to $T_p\C^k\cong\C^k$
 respectively ($p\in(\C_m,0)$). Let $\omega_i=\pi_i^*H^*\omega$, $i=1,2$. One has
 $H^*\omega=\omega_1+\omega_2$. (Pay attention that $\pi_1$
 and $\pi_2$ are not maps from $(\C^m,0)$ to $(\C^{m-k},0)$ and to $(\C^k,0)$.)
 Let $\eps>0$ be small enough and let $\psi(r)$ be a function as described in the proof of Theorem~\ref{S-T}.
 Let $\widehat{\omega}$ be the 1-form defined by
 $\widehat{\omega}_{\vert(\overline{z}',\overline{z}'')}$
 $=
 {\omega_1}_{\vert(\overline{z}',0)}+{\omega_2}_{\vert(0,\overline{z}'')}$,
 where $\overline{z}'\in(\C^{m-k},0)$, $\overline{z}''\in(\C^{k},0)$.
 One can see that the 1-form $\psi(r)\widehat{\omega}+(1-\psi(r))H^*\omega$ has no
 zeroes in the ball of radius $\eps$ outside of the origin, coincides with $H^*\omega$
 in a neighbourhood of the boundary of the ball and coincides with
 $(H^*\omega)_{(\C^{m-k},0)}\oplus (H^*\omega)_{(\C^{k},0)}$ in the ball of radius $\eps/2$.
 According to Theorem~\ref{S-T} this implies that 
 $\indrad^G(\omega;\C^{m},0)=\indrad^G((H^*\omega)_{(\C^{m-k},0)};\C^{m-k},0)\cdot
 \indrad^G((H^*\omega)_{(\C^{k},0)};\C^{k},0)=\indrad^G((H^*\omega)_{(\C^{m-k},0)};\C^{m-k},0)$.
\end{proof}

{\bf 6. Coincidence of equivariant radial and homological indices on smooth manifolds.}
We are ready to prove the main statement 
of the paper.

\begin{theorem}
 For a $G$-invariant holomorphic 1-form $\omega$ on $(\C^n,0)$ one has
 $$
 \rindrad^{G}(\omega; \C^n,0)=\indhom^{G}(\omega; \C^n,0)\,.
 $$
\end{theorem}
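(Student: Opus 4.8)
The plan is to reduce to the case of a cyclic group and then run an induction on the group order, using the destabilization of Proposition~\ref{prop1} and the Sebastiani--Thom formula of Theorem~\ref{S-T} to keep the dimension under control. The first move is to reduce to cyclic $G$. Both indices are compatible with restriction to a subgroup $H\subseteq G$: for the homological index one simply restricts to $H$ the $G$-representations on the cohomology of $(\Omega^\bullet_{\C^n,0},\wedge\omega)$, and for the reduced radial index the model 1-form $\widetilde{\omega}$ is also $H$-invariant, so its set of zeros, viewed as an $H$-set, is the restriction of the corresponding $G$-set (and $r_G$ commutes with restriction). Hence $\mathrm{Res}^G_H\,\indhom^G(\omega)=\indhom^H(\omega)$ and $\mathrm{Res}^G_H\,\rindrad^G(\omega)=\rindrad^H(\omega)$. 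Since a virtual representation of $G$ is determined by its restrictions to all cyclic subgroups (its character is read off on each $\langle g\rangle$), the restriction map $R(G)\to\prod_{H\ \mathrm{cyclic}}R(H)$ is injective, and it suffices to treat $G=\Z_m$. I then argue by induction on $m$, the case $m=1$ being the known non-equivariant coincidence of the homological and radial indices on $(\C^n,0)$.

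For the inductive step I take a generic $\Z_m$-invariant holomorphic deformation $\omega'$ of $\omega$, chosen so that its restriction to the fixed subspace $L=(\C^n)^{\Z_m}$ is non-degenerate. Both indices obey the law of conservation of number (the Proposition of Section~2 for the homological index, and its analogue for the radial one), so, writing $\Delta:=\indhom^{\Z_m}-\rindrad^{\Z_m}$ at the base point, the quantity $\Delta(\omega;\C^n,0)-\Delta(\omega';\C^n,0)$ is a sum of induced terms $I^{\Z_m}_{G_x}\bigl(\Delta(\omega';\C^n,x)\bigr)$ over the orbits of zeros $x\neq0$, using $r_G\circ I^{\Z_m}_{G_x}=I^{\Z_m}_{G_x}\circ r_{G_x}$. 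Because $\omega'|_L$ is non-degenerate, the only zero on $L$ is the origin, so every $x\neq0$ lies off $L$ and has proper isotropy $G_x\subsetneq\Z_m$; by the induction hypothesis $\Delta(\omega';\C^n,x)=0$. Thus $\Delta$ is a deformation invariant, and I may pass to a convenient model. Peeling off $L$ by Proposition~\ref{prop1} (the action on $L$ is trivial, so its top exterior power is trivial, and $\omega'|_L$ is non-degenerate) preserves both indices and reduces the problem, in strictly smaller dimension, to a $\Z_m$-invariant 1-form $\eta$ on the fixed-point-free part $M$ with $M^{\Z_m}=\{0\}$.

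The crux is the evaluation of $\Delta$ for a fixed-point-free representation $M=\bigoplus_j\C_{a_j}$ (no trivial weights). Here the same deformation-invariance argument applies: within the connected space of $\Z_m$-invariant 1-forms on $M$ with an isolated zero at the origin, $\Delta$ is constant, since along a generic invariant path the only zeros entering or leaving a small ball lie off the origin and hence have proper isotropy, where $\Delta=0$ by induction on $m$. I then evaluate $\Delta$ on the split model $\bigoplus_j\omega_{s_j}$, one standard invariant 1-form on each weight line: by Theorem~\ref{S-T} both indices of this model are products of the indices of the factors, and by the one-dimensional computation of Section~3, inflated from the faithful quotient $\Z_{m/\gcd(a_j,m)}$ acting on $\C_{a_j}$, these one-dimensional factors agree. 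Hence $\Delta=0$ on the model, so $\Delta\equiv0$, completing the induction. The non-zero-divisor property of the one-dimensional index supplies an alternative route in reserve: one stabilizes by $\oplus\,\omega_s$, proves the equality for the stabilized form, and cancels the common factor.

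The main obstacle is precisely this fixed-point-free origin. Unlike the non-equivariant setting, a generic $\Z_m$-invariant deformation cannot remove the degenerate zero that the representation forces at the origin, so the problem cannot be reduced to non-degenerate points where the two indices are manifestly equal. The combination of destabilization, the Sebastiani--Thom product, and the explicit one-dimensional indices is exactly what is required to handle it, and the delicate point is to verify that the split model is reachable within the isolated-zero locus (equivalently, that the non-zero-divisor cancellation is legitimate), so that the computed vanishing of $\Delta$ on the model transfers back to $\eta$ and hence to $\omega$.
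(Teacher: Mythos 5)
Your proposal is correct in substance and reaches the theorem, but it handles the crucial case --- a fixed-point-free representation, where no invariant deformation can make the zero at the origin non-degenerate --- by a genuinely different device than the paper. The paper never connects $\omega$ to a model form. Instead it stabilizes: it adjoins one extra coordinate $z_{n+1}$ carrying the weight $-k_n$ conjugate to the last weight of the given representation, replaces $\omega$ by $\widehat\omega=\omega\oplus z_{n+1}^{d-1}dz_{n+1}$ via Theorem~\ref{S-T}, and then deforms by $\lambda(z_{n+1}dz_n+z_ndz_{n+1})$ so that the restriction to the last two coordinates (whose weights sum to zero) is non-degenerate; Proposition~\ref{prop1} then drops the dimension from $n+1$ to $n-1$, forcing a double induction on the dimension and on the order of the cyclic group, and the equality for $\omega$ itself is recovered by cancelling the one-dimensional factor, which Section~3 shows is not a zero divisor --- so the route you keep ``in reserve'' is in fact the paper's actual argument. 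Your primary route instead fixes the dimension and argues that $\Delta=\indhom^{G}-\rindrad^{G}$ is constant on the space of invariant 1-forms with isolated zero on the fixed-point-free part (conservation of number plus induction on $|G|$ alone, since every off-origin zero has proper isotropy), and then evaluates $\Delta$ on the split monomial model $\bigoplus_j z_j^{l_j}dz_j$ by Theorem~\ref{S-T} and the one-dimensional computation of Section~3, inflated from the faithful quotient on each weight line (a check you correctly note and which does go through). This buys a single induction on the group order and no stabilization, but it transfers the whole burden onto the point you yourself flag: that the given form can be joined to the split model inside the isolated-zero locus. This can be supplied --- along the complex line $(1-t)\eta+t\,\omega_{\rm split}$ the set of $t$ for which the origin fails to be an isolated zero is a closed analytic, hence discrete, subset of $\C$ not containing $0$ or $1$, and a path avoiding it gives local constancy of $\Delta$ by the conservation law --- but it is exactly the connectedness argument that the paper's stabilize-and-destabilize trick is designed to avoid, so your write-up should include it explicitly rather than leave it as a ``delicate point.''
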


\begin{proof}
 For a subgroup $H$ of the group $G$, the indices $\rindrad^{H}(\omega; \C^n,0)$ and 
 $\indhom^{H}(\omega; \C^n,0)$ are the images of the indices $\rindrad^{G}(\omega; \C^n,0)$
 and $\indhom^{G}(\omega; \C^n,0)$ under the reduction homomorphism $R^G_H$. A representation
 of a finite group is determined by its character: the trace of the corresponding operator
 as a function on the group. Each element of a finite group is contained in a cyclic subgroup.
 Therefore it is sufficient to prove the statement for $G$ being a cyclic group $\Z_d$.

 The proof will use the induction both on the dimension $n$ of the space and on the number $d$ of
 elements of the group $G=\Z_d$. For $n=1$ (the 1-dimensional case) the statement is proved in Section~3.
 For the trivial group $G$ (i.\ e., in the non-equivariant setting) the statement is well known
 (see, e.\ g., \cite{EGS}). Assume first that the representation of the group $G$ on $\C^n$ has
 a non-trivial summand $\C^k$ with the trivial representation of $G$, $\C^n=\C^{n-k}\oplus\C^k$ is a
 decomposition of the representation on $\C^n$. There exists a $G$-invariant holomorphic deformation
 $\widetilde{\omega}$ of the 1-form $\omega$ such that at each singular point (zero) $p$ of the
 1-form $\widetilde{\omega}$ with $p\in\{0\}\times\C^k$ (i.e., $p=(0,y_0)$) the restriction of
 $\widetilde{\omega}$ to the (affine) subspace $\{0\}\times\C^k$ has a non-degenerate zero at
 $p$. Proposition~\ref{prop1} implies that there exists a $G$-invariant 1-form $\omega'$ on 
 $(\C^{n-k},0)$ such that $\rindrad^{G}(\widetilde{\omega}; \C^n,p)=\rindrad^{G}(\omega'; \C^{n-k},0)$,
 $\indhom^{G}(\widetilde{\omega}; \C^n,p)=\indhom^{G}(\omega'; \C^{n-k},0)$. According to the assumption
 of the induction one has $\rindrad^{G}(\omega'; \C^{n-k},0)=\indhom^{G}(\omega'; \C^{n-k},0)$
 and therefore $\rindrad^{G}(\widetilde{\omega}; \C^n,p)=\indhom^{G}(\widetilde{\omega}; \C^n,p)$.
 For a singular point
 $p$ of the 1-form $\widetilde{\omega}$ outside of $\{0\}\times\C^k$, one has $G_p\varsubsetneq G$.
 The assumption of the induction gives
 $\rindrad^{G_p}(\widetilde{\omega}; \C^n,p)=\indhom^{G_p}(\widetilde{\omega}; \C^n,p)$ and therefore
 $I_{G_p}^G\rindrad^{G_p}(\widetilde{\omega}; \C^n,p)=I_{G_p}^G\indhom^{G_p}(\widetilde{\omega}; \C^n,p)$.
 The laws of conservation of number for the equivariant radial and for the equivariant homological
 indices imply that
 $\rindrad^{G}(\omega; \C^n,0)=\indhom^{G}(\omega; \C^n,0)$. Therefore we can assume that the
 representation of $G$ on the space $\C^n$ has no trivial summands.
 
 Let $\sigma$ be a generator of the group $G=\Z_d$ and let $\sigma$ act on $\C^n$ by
 $\sigma\star(z_1,\ldots,z_n)=(\sigma^{k_1}z_1,\ldots,\sigma^{k_n}z_n)$, where (in the RHS)
 $\sigma=\exp{\frac{2\pi i}{d}}$, $0<k_i<d$ for $i=1,\ldots,n$. Let the space $\C^{n+1}=\C^{n}\oplus\C^{1}$
 be endowed with the representation $\sigma\star(z_1,\ldots,z_n, z_{n+1})=
 (\sigma^{k_1}z_1,\ldots,\sigma^{k_n}z_n,\sigma^{-k_n}z_{n+1})$ of the group $\Z_d$ and let
 $\widehat{\omega}=\omega\oplus z_{n+1}^{d-1}dz_{n+1}$. One has
 $\rindrad^{G}(\widehat{\omega}; \C^{n+1},0)=
 \rindrad^{G}(\omega; \C^n,0)\cdot\rindrad^{G}(z_{n+1}^{d-1}dz_{n+1}; \C^1,0)$,
 $\indhom^{G}(\widehat{\omega}; \C^{n+1},0)=
 \indhom^{G}(\omega; \C^n,0)\cdot\indhom^{G}(z_{n+1}^{d-1}dz_{n+1}; \C^1,0)$ (Theorem~\ref{S-T}).
 Since $\rindrad^{G}(z_{n+1}^{d-1}dz_{n+1};\C^1,0)=\indhom^{G}(z_{n+1}^{d-1}dz_{n+1}; \C^1,0)$
 is not a divisor of zero (Section~3), it is sufficient to show that
 $\rindrad^{G}(\widehat{\omega}; \C^{n+1},0)=\indhom^{G}(\widehat{\omega}; \C^{n+1},0)$.
 Let $\widehat{\omega}':=\widehat{\omega}+\lambda (z_{n+1}dz_{n}+z_{n}dz_{n+1})$ be a deformation of
 the 1-form $\widehat{\omega}$ ($\lambda$ is small enough). The restriction of the 1-form $\widehat{\omega}'$
 to the subspace $\C^2$ corresponding to the last two coordinates has a non-degenerate singular point
 at the origin. By Proposition~\ref{prop1} there exists a holomorphic 1-form $\eta$ on $(\C^{n-1},0)$
 such that $\indhom^{G}(\widehat{\omega}'; \C^{n+1},0)=\indhom^{G}(\eta; \C^{n-1},0)$,
 $\rindrad^{G}(\widehat{\omega}'; \C^{n+1},0)=\rindrad^{G}(\eta; \C^{n-1},0)$.
 According to the assumption
 of the induction one has $\rindrad^{G}(\eta; \C^{n-1},0)=\indhom^{G}(\eta; \C^{n-1},0)$.
 For a singular point
 $p$ of the 1-form $\widehat{\omega}'$ outside of the origin, one has $G_p\varsubsetneq G$.
 The assumption of the induction gives $\rindrad^{G_p}(\widehat{\omega}'; \C^{n+1},p)=
 \indhom^{G_p}(\widehat{\omega}'; \C^{n+1},p)$
 and therefore $I_{G_p}^G\rindrad^{G_p}(\widehat{\omega}'; \C^{n+1},p)=
 I_{G_p}^G\indhom^{G_p}(\widehat{\omega}'; \C^{n+1},p)$.
 The laws of conservation of number for the equivariant radial and for the equivariant homological indices
 imply that $\rindrad^{G}(\widehat{\omega}; \C^{n+1},0)=\indhom^{G}(\widehat{\omega}; \C^{n+1},0)$.
\end{proof}

{\bf 7. An equivariant version of the Milnor number for singular varieties.}
A notion of the GSV-index of a (continuous) 1-form on an isolated (complex) complete intersection singularity
(ICIS) was introduced in \cite{EG-MMJ}. There was given an algebraic formula for the GSV-index of a
holomorphic 1-form. (The proof there contained a minor mistake corrected in \cite[Theorem 4]{EG-BLMS}.)
In \cite{EGS} it was shown that in this case the GSV-index coincides with the homological one. Actually this
follows directly from the algebraic formula for the GSV-index from \cite{EG-MMJ} and the fact that for
a holomorphic 1-form $\omega$ with an isolated singular point on an $n$-dimensional ICIS $(X,0)$ the only
non-trivial (co)homology group of the complex (\ref{hom-complex}) is the one in dimension $n$:
\cite{Greuel}. Strictly speaking, in \cite{Greuel} it is proved for $\omega=df$, where $f$ is a holomorphic
function on $(X,0)$, however G.-M.~Greuel explained that there is no difference for the general case.

Let $(X,0)=\{f_1=\ldots=f_k=0\}\subset(\C^{n+k}, 0)$ be a $G$-invariant ICIS defined by equations
with $G$-invariant RHSs $f_i$. The notion of the equivariant GSV-index of a $G$-invariant 1-form
$\omega$ on $(X,0)$ was given in \cite{EG-EuJM}. It was defined as an element of the Burnside ring $A(G)$
of the group $G$. One way to define it is the following. Let us take a $G$-invariant representative
of the 1-form $\omega$ defined in a neighbourhood of the origin in $\C^{n+k}$. We shall denote it
by $\omega$ as well. Let
$X_{\overline{\eps}}={\overline{f}}^{-1}(\overline{\eps})\cap B^{2(n+k)}_\delta(0)$ be the Milnor fibre
of the ICIS $(X,0)$ ($\overline{f}=(f_1,\ldots, f_k)$, $\overline{\eps}=(\eps_1,\ldots,\eps_k)$,
$0<\Vert\overline{\eps}\Vert\ll\delta$, $\delta$ is small enough). One may assume that the set
${\rm Sing}\,\omega$ of the singular points of the restriction of the 1-form $\omega$ to
$X_{\overline{\eps}}$ is finite (i.e., this restriction has only isolated singular points (zeroes)). Then
$$
\indGSV^G(\omega;X,0):=
\sum_{[p]\in {\rm Sing}\,\omega/G}I_{G_p}^G(\indrad^{G_p}(\omega;X_{\overline{\eps}},0))\,,
$$
where $p$ is a representative of the $G$-orbit $[p]$.
Let $\rindGSV(\omega;X,0):=r\left(\indGSV(\omega;X,0)\right)\in R(G)$ be the reduction of the equivariant
GSV-index to the ring $R(G)$ of representations of $G$.

The arguments of \cite[Theorem 4]{EG-BLMS} (together with the fact that the only
non-trivial (co)hohomology group of the complex (\ref{hom-complex}) is the one in dimension $n$) imply 
the following statement.

\begin{proposition}
For a holomorphic $G$-invariant 1-form $\omega$ on the $G$-invariant ICIS $(X,0)$,
the equivariant homological index $\indhom^G(\omega,X,0)$ is equal to the reduction
$\rindGSV(\omega;X,0)$ of the equivariant GSV-index. 
\end{proposition}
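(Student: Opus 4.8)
The plan is to run the argument of \cite{EG-BLMS} that equates the GSV-index with the homological one, keeping track of the $G$-action at each step, so that the equality of \emph{dimensions} proved there becomes an equality of \emph{classes} in $R(G)$; the two enabling facts are that $H^n$ is the only surviving cohomology and that the base of the smoothing carries the trivial $G$-action. First I would dispose of the left-hand side: by Greuel's vanishing \cite{Greuel} the complex (\ref{hom-complex}) of the ICIS $(X,0)$ has cohomology only in degree $n$, so that
$$
\indhom^G(\omega;X,0)=[H^n(\Omega^\bullet_{X,0},\wedge\omega)]\in R(G),
$$
and it is enough to identify $\rindGSV(\omega;X,0)$ with the class of this one $G$-module.

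Next I would organise the Milnor fibres. Since each $f_i$ is $G$-invariant, $\overline{f}=(f_1,\ldots,f_k)$ is equivariant for the \emph{trivial} action of $G$ on the target $\C^k$, so every value $\overline{\eps}$ is $G$-fixed and every fibre $X_{\overline{\eps}}=\overline{f}^{-1}(\overline{\eps})\cap B_\delta$ is a smooth $G$-invariant $n$-manifold on which $\omega$ has a finite $G$-set of zeros. At a zero $p$, with isotropy $G_p$, the fibre is smooth, so the main Theorem of the paper (the coincidence of the two indices on $(\C^n,0)$), applied to $G_p$ in a chart at $p$, gives $\rindrad^{G_p}(\omega;X_{\overline{\eps}},p)=\indhom^{G_p}(\omega;X_{\overline{\eps}},p)$; this is exactly where the present result replaces the non-equivariant tautology ``index $=$ multiplicity''. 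Since reduction commutes with induction, summing over orbits rewrites the definition of the equivariant GSV-index as
$$
\rindGSV(\omega;X,0)=\sum_{[p]}I_{G_p}^G\,\indhom^{G_p}(\omega;X_{\overline{\eps}},p)\in R(G),
$$
the class of the top cohomology of $(\Omega^\bullet,\wedge\omega)$ on the \emph{smooth} fibre $X_{\overline{\eps}}$ (the cohomology sheaf is supported at the zeros, so its global sections are the direct sum of the local modules).

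It then remains to compare the central and the generic fibre by degeneration. Viewing $\omega$ on the smooth total space $\mathcal{X}=\{(z,t)\in\C^{n+k}\times\C^k:\overline{f}(z)=t\}$, the relative complex $(\Omega^\bullet_{\mathcal{X}/\C^k},\wedge\omega)$ has, by Greuel's vanishing on every fibre, cohomology only in degree $n$, and $\mathcal{H}^n$ is a coherent $\O_{\C^k}$-module finite over the base. The conservation-of-number argument of \cite{EG-BLMS}, together with this fibrewise vanishing, shows that $\mathcal{H}^n$ is flat over $\C^k$; its fibre over $\overline{\eps}=0$ is $H^n(\Omega^\bullet_{X,0},\wedge\omega)$ and its generic fibre is the right-hand side of the last display. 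As $G$ acts trivially on the base, its action on $\mathcal{H}^n$ is $\O_{\C^k}$-linear, and since $|G|$ is invertible in $\C$ one gets a splitting $\mathcal{H}^n=\bigoplus_\chi\mathcal{H}^n_\chi$ into isotypic $\O_{\C^k}$-submodules. Each $\mathcal{H}^n_\chi$ is a direct factor of a flat module, hence flat, hence locally free of constant rank, and that rank is the multiplicity of $\chi$ in the fibre. Therefore the class of the fibre in $R(G)$ is independent of $\overline{\eps}$, the central and generic classes coincide, and combining this with the two displays gives $\rindGSV(\omega;X,0)=\indhom^G(\omega;X,0)$.

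The main obstacle is the flatness of $\mathcal{H}^n$ over the base --- exactly the content of the conservation-of-number step of \cite{EG-BLMS}. Once it is granted, the passage to the representation ring is formal: the isotypic splitting, available because the base action is trivial and $|G|$ is invertible, reduces the $R(G)$-valued identity to the numerical one on each summand, introducing nothing beyond the non-equivariant argument together with the coincidence of indices on smooth manifolds established earlier in the paper.
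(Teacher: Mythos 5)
Your proposal is correct and follows essentially the same route as the paper, which simply invokes ``the arguments of \cite{EG-BLMS}, Theorem~4'' together with Greuel's concentration of the cohomology of the complex (\ref{hom-complex}) in degree $n$; you have filled in exactly the steps that citation compresses (the flat degeneration of $\mathcal{H}^n$ over the base with trivial $G$-action, the isotypic splitting upgrading the numerical conservation of number to an identity in $R(G)$, and the use of the smooth-case coincidence theorem at the zeros on the Milnor fibre). No gaps to report.
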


Let $\chi^G(X_{\overline{\eps}})\in A(G)$ be the equivariant Euler characteristic
of the Milnor fibre $X_{\overline{\eps}}$ and let
$\overline{\chi}^G(X_{\overline{\eps}}):=\chi^G(X_{\overline{\eps}})-1$ be the 
{\em reduced equivariant Euler characteristic} of it.
For a $G$-invariant radial (real) 1-form $\omega_{\rm rad}$ on the ICIS $(X,0)$,
the equivariant GSV-index $\indGSV^G(\omega_{\rm rad};X,0)$ is equal to $\chi^G(X_{\overline{\eps}})$.
This implies the following statement (an equivariant analogue of \cite[Proposition~5.3]{EG-EuJM} for 1-forms).

\begin{proposition}
 For a $G$-invariant real 1-form $\omega$ on the ICIS $(X,0)$ one has
 $$
 \indGSV^G(\omega;X,0)-\indrad^G(\omega;X,0)=\overline{\chi}^G(X_{\overline{\eps}})\,.
 $$
 For a $G$-invariant complex 1-form $\omega$ on the ICIS $(X,0)$ one has
 $$
 \indGSV^G(\omega;X,0)-\indrad^G(\omega;X,0)=(-1)^{n}\overline{\chi}^G(X_{\overline{\eps}})\,.
 $$
\end{proposition}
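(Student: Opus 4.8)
The plan is to prove the real case first and then obtain the complex case by passing to the real part. For a $G$-invariant real $1$-form $\omega$ with an isolated singularity on $(X,0)$, set $D^G(\omega):=\indGSV^G(\omega;X,0)-\indrad^G(\omega;X,0)\in A(G)$. The key claim I would establish is that $D^G(\omega)$ does not depend on the choice of $\omega$. Granting this, the result is immediate from the input recalled just above: evaluating on a $G$-invariant radial real $1$-form $\omega_{\rm rad}$ gives $\indrad^G(\omega_{\rm rad};X,0)=1\in A(G)$ directly from the definition of the radial index (the only singular point of $\omega_{\rm rad}$ is the $G$-fixed origin, with multiplicity $1$, so the representing $G$-set is $G/G$), while $\indGSV^G(\omega_{\rm rad};X,0)=\chi^G(X_{\overline{\eps}})$; hence $D^G(\omega)=\chi^G(X_{\overline{\eps}})-1=\overline{\chi}^G(X_{\overline{\eps}})$.

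To prove the independence of $\omega$, I would connect any two such forms through a generic $G$-invariant deformation $\omega'$. Both the equivariant GSV-index and the equivariant radial index obey a law of conservation of number of the shape $\indGSV^G(\omega;X,0)=\indGSV^G(\omega';X,0)+\sum_{[p]}I_{G_p}^G\bigl(\indGSV^{G_p}(\omega';X,p)\bigr)$, and likewise with $\indrad$ in place of $\indGSV$, where the sum runs over the orbits of singular points $p\neq 0$ of $\omega'$ in a small punctured neighbourhood of the origin. Since $(X,0)$ has an isolated singularity, every such $p$ is a smooth point of $X$; there the local Milnor fibre is contractible, so $\indGSV^{G_p}(\omega';X,p)=\indrad^{G_p}(\omega';X,p)$ (both equal the usual equivariant index of the restriction to the smooth manifold $X$ at $p$). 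Therefore the correction terms carry the same induction maps $I_{G_p}^G$ and cancel in the difference, yielding $D^G(\omega)=D^G(\omega')$. This establishes the first asserted equality.

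For the complex case I would apply the real statement to the real part, obtaining $\indGSV^G({\rm Re\,}\omega;X,0)-\indrad^G({\rm Re\,}\omega;X,0)=\overline{\chi}^G(X_{\overline{\eps}})$, and then compare each index of $\omega$ with that of ${\rm Re\,}\omega$. For the radial index this is the defining relation $\indrad^G(\omega;X,0)=(-1)^n\indrad^G({\rm Re\,}\omega;X,0)$. For the GSV-index the same factor appears: by its definition $\indGSV^G(\omega;X,0)$ is a sum over the orbits of the zeroes of $\omega$ on the smooth Milnor fibre $X_{\overline{\eps}}$ (a complex manifold of dimension $n$) of the local complex radial indices $I_{G_p}^G\bigl(\indrad^{G_p}(\omega;X_{\overline{\eps}},p)\bigr)$; on a smooth $n$-dimensional complex manifold $\indrad^{G_p}(\omega;X_{\overline{\eps}},p)=(-1)^n\indrad^{G_p}({\rm Re\,}\omega;X_{\overline{\eps}},p)$, and since $\omega$ and ${\rm Re\,}\omega$ have the same zeroes on $X_{\overline{\eps}}$ and $I_{G_p}^G$ is linear, summation yields $\indGSV^G(\omega;X,0)=(-1)^n\indGSV^G({\rm Re\,}\omega;X,0)$. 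Subtracting and invoking the real case gives $\indGSV^G(\omega;X,0)-\indrad^G(\omega;X,0)=(-1)^n\overline{\chi}^G(X_{\overline{\eps}})$, as claimed.

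The step I expect to be the main obstacle is the equivariant deformation-invariance of $D^G$. One must check that a generic deformation can be realized $G$-invariantly, that the two laws of conservation of number genuinely hold in the Burnside-ring setting with matching induction maps $I_{G_p}^G$, and that the coincidence of the GSV- and radial indices at the off-origin singular points holds as an identity of elements of $A(G_p)$ (i.e.\ of $G_p$-sets), not merely numerically. Once these equivariant refinements are in place the cancellation is formal, and the remaining comparisons with the real part are routine consequences of the definitions.
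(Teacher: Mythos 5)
Your proposal is correct and follows essentially the same route the paper intends: evaluate the difference on a $G$-invariant radial real 1-form, where $\indGSV^G(\omega_{\rm rad};X,0)=\chi^G(X_{\overline{\eps}})$ and $\indrad^G(\omega_{\rm rad};X,0)=1$, show via the laws of conservation of number (with cancellation of the matching correction terms at smooth points of $X$) that the difference is independent of the 1-form, and deduce the complex case from the $(-1)^n$ relation between a complex 1-form and its real part. The paper leaves exactly these steps implicit, pointing to the analogous Proposition~5.3 of \cite{EG-EuJM}.
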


The reduction $r_G((-1)^{n}\overline{\chi}^G(X_{\overline{\eps}}))\in R(G)$ is the equivariant
Milnor number of the ICIS $(X,0)$ in the sense of \cite{Wall}, i.e., it is equal to the class
in $R(G)$ of the $G$-module $H^n(X_{\overline{\eps}})$.

Let $(X,0)$ be a complex analytic $G$-variety of pure dimension $n$ with an isolated singular point at
the origin. The laws of conservation of number for the equivariant radial and for the equivariant homological
indices together with the fact that they coincide on a smooth manifold imply the following statement.

\begin{proposition}\label{prop2}
 For a $G$-invariant holomorphic 1-form $\omega$ on $(X,0)$ with an isolated singular point at the origin
 the difference $\indhom^G(\omega;X,0) - \rindrad^G(\omega;X,0)\in R(G)$ does not depend on the 1-form
 $\omega$.
\end{proposition}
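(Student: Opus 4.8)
The plan is to show that, on the set $U$ of $G$-invariant holomorphic 1-forms on $(X,0)$ having an isolated zero at the origin, the function
$$
D(\omega):=\indhom^G(\omega;X,0)-\rindrad^G(\omega;X,0)\in R(G)
$$
is constant. Everything rests on one local observation: away from the origin $X$ is smooth, and there the main Theorem of Section~6 forces the homological and the reduced radial indices to coincide.

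First I would prove that $D$ is invariant under small $G$-invariant holomorphic deformations. Let $\omega'$ be such a deformation of $\omega\in U$. Subtracting the law of conservation of number for $\rindrad^G$ from the one for $\indhom^G$ (the Proposition above) gives
$$
D(\omega)-D(\omega')=\sum_{[x]} I_{G_x}^G\!\left(\indhom^{G_x}(\omega';X,x)-\rindrad^{G_x}(\omega';X,x)\right),
$$
the sum being over the orbits of singular points of $\omega'$ in a punctured neighbourhood of the origin. Since $X$ has an isolated singular point at $0$, every such $x\neq 0$ is a smooth point of $X$; linearizing the (finite) stabilizer $G_x$ at $x$ identifies the germ $(X,x)$ $G_x$-equivariantly with $(\C^{\dim X},0)$ carrying a linear $G_x$-action, so the main Theorem of Section~6 gives $\indhom^{G_x}(\omega';X,x)=\rindrad^{G_x}(\omega';X,x)$. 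Hence the right-hand side vanishes and $D(\omega')=D(\omega)$. It is worth noting that each index separately does jump when singular points born in the deformation collide with the origin; it is exactly the cancellation at smooth points that renders the difference $D$ insensitive to this, so that $D$ behaves better than either of its two terms.

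To globalize, take any $\omega_0,\omega_1\in U$ and consider the pencil $\omega_t:=(1-t)\omega_0+t\omega_1$, $t\in\C$, of $G$-invariant holomorphic forms. Fix a small ball $B_\eps$ in which $\omega_0$ and $\omega_1$ have the origin as their only zero on $X$. The parameters $t$ for which $\omega_t$ acquires a non-isolated zero somewhere in $B_\eps\cap X$ form a proper analytic subset $B'\subset\C$ (it does not contain $0$), hence a discrete set, so $\C\setminus B'$ is connected and contains both $0$ and $1$. On $\C\setminus B'$ the conservation of number applies throughout, and the deformation invariance just established shows that $t\mapsto D(\omega_t)$ is locally constant there; being locally constant on a connected set it is constant, whence $D(\omega_0)=D(\omega_1)$.

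The conceptual heart is the smooth-point cancellation that turns the two (individually non-invariant) conservation laws into deformation invariance of their difference. The step requiring the most care is the globalization: one must verify that the bad locus $B'$ really is a proper — thus discrete — analytic subset of the line, so that $\C\setminus B'$ is connected, and that on its complement the singular points accounted for by conservation of number are isolated and lie on the smooth part of $X$. The auxiliary identification of $(X,x)$ with a linear $G_x$-representation, needed to invoke Section~6, is routine for a finite group fixing a smooth point but should be stated explicitly.
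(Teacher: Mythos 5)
Your argument is correct and is essentially the paper's own proof: the paper disposes of this proposition in one sentence by combining the two laws of conservation of number with the coincidence of $\indhom$ and $\rindrad$ at smooth points (the main Theorem of Section~6), which is exactly your cancellation step. Your pencil argument connecting $\omega_0$ to $\omega_1$ through the discrete bad locus merely fills in the globalization detail that the paper leaves implicit, so no further comparison is needed.
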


As it was shown above, for a $G$-invariant ICIS this difference is the equivariant Milnor number
of the ICIS. This permits to regard $\indhom^G(\omega;X,0) - \rindrad^G(\omega;X,0)\in R(G)$
as a version of the equivariant Milnor number of a germ a $G$-variety $(X,0)$ with an isolated
singular point.

\end{document}